\documentclass[a4paper,12pt]{article}

\voffset=-20mm \textheight=240mm
\hoffset=-8mm \textwidth=155mm

\usepackage[utf8]{inputenc}
\usepackage[english,russian]{babel}
\usepackage{amsmath, amssymb, amsthm}
\usepackage{enumerate}
\usepackage[colorlinks=true,linkcolor=blue,citecolor=blue]{hyperref}

\usepackage{tikz}
\usepackage{mdframed}
\usepackage{pgfplots}
\pgfplotsset{width=7cm,compat=1.10}
\pgfplotsset{
  /pgfplots/colormap={pink}{%
    color(0cm) = (blue);
    color(1cm) = (cyan!50!blue);
    color(2cm) = (cyan!50);
    color(3cm) = (cyan) }
}
\usetikzlibrary{calc,arrows,decorations.pathreplacing,fadings,3d,positioning}

\title{О слабой равномерной диофантовой экспоненте вещественного числа
       \thanks{Исследование выполнено за счёт гранта Российского научного фонда № 25-11-00112, https://rscf.ru/project/25-11-00112/}}
\author{О.\,Н.\,Герман}
\date{}

\theoremstyle{definition}
\newtheorem{definition}{Определение}

\newtheorem*{notation*}{Обозначение}

\theoremstyle{remark}

\newtheorem*{remark*}{Замечание}

\theoremstyle{plain}
\newtheorem{theorem}{Теорема}
\newtheorem{lemma}{Лемма}

\newtheorem*{corollary*}{Следствие}

\DeclareMathOperator{\conv}{conv}

\renewcommand{\phi}{\varphi}

\renewcommand{\vec}[1]{\mathbf{#1}}
\renewcommand{\geq}{\geqslant}
\renewcommand{\leq}{\leqslant}

\newcommand{\e}{\varepsilon}

\newcommand{\R}{\mathbb{R}}
\newcommand{\Z}{\mathbb{Z}}
\newcommand{\Q}{\mathbb{Q}}
\newcommand{\N}{\mathbb{N}}

\newcommand{\La}{\Lambda}

\newcommand{\cB}{\mathcal{B}}

\newcommand{\cH}{\mathcal{H}}

\newcommand{\cK}{\mathcal{K}}

\newcommand{\cP}{\mathcal{P}}
\newcommand{\cQ}{\mathcal{Q}}

\begin{document}

\maketitle

\begin{abstract}
  В данной работе мы вводим понятие слабой равномерной диофантовой экспоненты вещественного числа и получаем полное описание спектра её значений.
\end{abstract}

\section{Введение}


В 1842 году в работе \cite{dirichlet} Дирихле опубликовал свою знаменитую теорему, положившую начало теории диофантовых приближений. Для одного вещественного числа она имеет совсем простой вид. Пусть $\theta\in\R$. Рассмотрим систему неравенств 
\begin{equation} \label{eq:dirichlet_one_number}
\begin{cases}
  |x|\leq t \\
  |\theta x-y|\leq t^{-\gamma}
\end{cases}.
\end{equation}
Теорема Дирихле говорит, что эта система имеет ненулевое целочисленное решение (относительно $x,y$) при $\gamma=1$ и произвольном $t\geq1$.

\begin{definition} \label{def:belpha_one_number}
  Супремум вещественных чисел $\gamma$, для которых существует сколь угодно большое $t$, такое что (соотв. для которых при любом достаточно большом $t$) система неравенств \eqref{eq:dirichlet_one_number} имеет ненулевое решение $(x,y)\in\Z^2$, называется \emph{регулярной} (соотв. \emph{равномерной}) \emph{диофантовой экспонентой} числа $\theta$ и обозначается $\omega(\theta)$ (соотв. $\hat\omega(\theta)$).
\end{definition}

Из теоремы Дирихле мгновенно следуют <<тривиальные>> оценки
\[
  \omega(\theta)\geq\hat\omega(\theta)\geq 1.
\]
Для рациональных $\theta$ очевидно, что $\omega(\theta)=\hat\omega(\theta)=\infty$. Для иррациональных $\theta$ экспонента $\omega(\theta)$ может принимать все значения из отрезка $[1,\infty]$ (см. параграф \ref{sec:continued_fractions_and_lattices}). Тогда как экспонента $\hat\omega(\theta)$ для иррациональных $\theta$ оказывается <<вырожденной>>, ибо принимает лишь значение $1$. Причина этого в том, что при каждом $k\in\N$ параллелограмм
\[
  \Big\{(x,y)\in\R^2 \,\Big|\, |x\theta-y|\leq|q_k\theta-p_k|,\ |x|\leq q_{k+1} \Big\},
\]
где $p_k/q_k$, $p_{k+1}/q_{k+1}$ --- последовательные подходящие дроби числа $\theta$, не содержит целочисленных точек, отличных от $\vec 0,\pm(q_k,p_k),\pm(q_{k+1},p_{k+1})$, а площадь треугольника с вершинами в точках $\vec 0,(q_k,p_k),(q_{k+1},p_{k+1})$ в точности равна $1/2$, ибо $|p_kq_{k+1}-q_kp_{k+1}|=1$. Подробности можно найти, например, в обзоре \cite{german_UMN_2023}.

Понятие диофантовой экспоненты естественным образом возникает также в контексте изучения решёток. Исследование некоторых видов равномерных диофантовых экспонент решёток (см. \cite{german_comm_math_2023,german_izv_2025}) привело к более <<тонкому>> варианту равномерной диофантовой экспоненты вещественного числа, спектр которой оказывается нетривиальным. Изучению этой экспоненты и посвящена данная статья. 

В параграфе \ref{sec:weak_uniform_exponent_of_a_number} мы даём определение \emph{слабой равномерной диофантовой экспоненты} вещественного числа и формулируем основной результат статьи, описывающий спектр значений этой экспоненты. Параграф \ref{sec:lattice_exponents} посвящён диофантовым экспонентам решёток. Мы даём соответствующие определения и приводим известные факты об устройстве спектров этих экспонент. В параграфе \ref{sec:continued_fractions_and_lattices} мы описываем связь между диофантовыми экспонентами числа и диофантовыми экспонентами решёток в $\R^2$, после чего переформулируем основной результат работы в терминах диофантовых экспонент решёток. В параграфе \ref{sec:relative_and_hyperbolic_minima} мы обсуждаем относительные и гиперболические минимумы решётки, которые в параграфе \ref{sec:the_proof} используем для доказательства основного результата данной статьи.

\section{Слабая равномерная диофантова экспонента вещественного числа}\label{sec:weak_uniform_exponent_of_a_number}

\begin{definition} \label{def:weak_uniform_real_number_exponent}
  Пусть $\theta$ --- вещественное число. Супремум вещественных чисел $\gamma$, таких что для любого достаточно большого $t$ существуют целые числа $p$, $q$, удовлетворяющие неравенствам
  \[
    1\leq q\leq t,
    \qquad
    q|q\theta-p|\leq t^{1-\gamma},
  \]
  называется \emph{слабой равномерной диофантовой экспонентой} числа $\theta$ и обозначается $\hat{\hat\omega}(\theta)$.
\end{definition}

Для рациональных $\theta$ очевидно, что $\omega(\theta)=\hat{\hat\omega}(\theta)=\hat\omega(\theta)=\infty$. Для иррациональных же $\theta$ из определений и всего вышесказанного следует, что
\[
  \omega(\theta)\geq
  \hat{\hat\omega}(\theta)\geq
  \hat\omega(\theta)=1.
\]
Следующее утверждение является основным результатом статьи.

\begin{theorem}\label{t:spectrum_of_the_weak_for_a_real_number}
  Если $\theta\in\Q$, то $\hat{\hat\omega}(\theta)=\infty$. Спектр значений экспоненты $\hat{\hat\omega}(\theta)$ для иррациональных $\theta$ совпадает с отрезком $[1,2]$.
\end{theorem}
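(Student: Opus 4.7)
The rational case is immediate: for $\theta = a/b \in \Q$ I would take $(q,p) = (b,a)$ and any $t \geq b$, so that $q|q\theta - p| = 0$ satisfies the inequality in Definition~\ref{def:weak_uniform_real_number_exponent} for every $\gamma$, giving $\hat{\hat\omega}(\theta) = \infty$. Henceforth let $\theta$ be irrational, with convergents $p_k/q_k$ and the auxiliary quantities $D_k := q_k|q_k\theta - p_k|$. The first step of the plan is to rewrite $\hat{\hat\omega}$ in continued-fraction terms: by the classical best-approximation property of convergents, for $q_k \leq t < q_{k+1}$ one has
\[
  \psi(t) \;:=\; \min_{1 \leq q \leq t,\, p \in \Z} q\,|q\theta - p|
         \;=\; \min_{0 \leq j \leq k} D_j,
\]
so the definition becomes $\hat{\hat\omega}(\theta) = 1 - \limsup_{t \to \infty} \log\psi(t)/\log t$, and the lower bound $\hat{\hat\omega}(\theta) \geq 1$ follows at once from $D_k < 1$.

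For the ceiling $\hat{\hat\omega}(\theta) \leq 2$ I would test at $t = q_{k+1}-1$ and invoke the identity $q_j|q_{j+1}\theta - p_{j+1}| + q_{j+1}|q_j\theta - p_j| = 1$, obtained by rewriting $|p_jq_{j+1} - p_{j+1}q_j| = 1$ in terms of $\varepsilon_j := p_j - q_j\theta$. This gives $D_j \geq q_j/(q_j + q_{j+1})$, so that
\[
  \psi(q_{k+1}-1)\cdot q_{k+1}
    \;\geq\; \min_{0 \leq j \leq k}\frac{q_j\,q_{k+1}}{q_j + q_{j+1}}
    \;\geq\; \min_{0 \leq j \leq k}\frac{q_j}{2}
    \;\geq\; \frac{1}{2},
\]
where the middle step uses $q_j + q_{j+1} \leq 2 q_{j+1} \leq 2 q_{k+1}$. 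Taking logarithms, $\log\psi(t)/\log t \geq -1 - o(1)$ along the sequence $t = q_{k+1}-1$, whence $\hat{\hat\omega}(\theta) \leq 2$.

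For the realisation of a prescribed $\gamma \in [1,2]$, I would set $\alpha = 1/(2-\gamma) \in [1,+\infty]$ and build $\theta$ by prescribing partial quotients so that $\log q_{k+1}/\log q_k \to \alpha$: concretely $a_{k+1} := \lfloor q_k^{\alpha-1}\rfloor$ for $\alpha < \infty$ (so $a_{k+1} = 1$, i.e., $\theta = (\sqrt{5}-1)/2$, when $\gamma = 1$), and a rapidly diverging choice such as $a_{k+1} := 2^{q_k}$ when $\alpha = \infty$. Then $D_k \asymp q_k/q_{k+1} \asymp q_k^{1-\alpha}$, and since this sequence is (eventually) decreasing one has $\psi \equiv D_k$ on $[q_k, q_{k+1})$. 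The function $\log\psi(t)/\log t = \log D_k/\log t$ is monotone on each such interval, and its supremum $\log D_k/\log(q_{k+1}-1)$ at the right end tends to $(1-\alpha)/\alpha$ (understood as $-1$ for $\alpha = \infty$). Hence $\limsup\log\psi(t)/\log t = (1-\alpha)/\alpha$ and $\hat{\hat\omega}(\theta) = 2 - 1/\alpha = \gamma$, so every value in $[1,2]$ is attained.

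The principal technical obstacle is the uniform lower bound for $\min_{j \leq k} D_j$ used in the ceiling step: one must verify that the absolute constant $1/2$ persists no matter which index realises the minimum, which is precisely the content of the chain $q_jq_{k+1}/(q_j + q_{j+1}) \geq q_jq_{k+1}/(2q_{j+1}) \geq q_j/2 \geq 1/2$. In the lattice language of \S\ref{sec:lattice_exponents}--\S\ref{sec:relative_and_hyperbolic_minima}, this argument is the two-dimensional instance of the interplay between hyperbolic and relative minima of the unimodular lattice $\{(q,\,q\theta - p) : p, q \in \Z\} \subset \R^2$, and that is presumably the framework in which the actual proof will be organised in \S\ref{sec:the_proof}.
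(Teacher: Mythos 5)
Your proof is correct, but it takes a genuinely different route from the paper. You stay entirely inside continued-fraction theory: you reduce $\hat{\hat\omega}(\theta)$ to the function $\psi(t)=\min_{j\le k}q_j|q_j\theta-p_j|$ via the best-approximation property, get the ceiling $\hat{\hat\omega}(\theta)\le 2$ from the identity $q_j|q_{j+1}\theta-p_{j+1}|+q_{j+1}|q_j\theta-p_j|=1$ tested at $t=q_{k+1}-1$, and realise each $\gamma\in[1,2]$ by prescribing power growth of the partial quotients. The paper instead passes to the lattice $\La_\theta$ via \eqref{eq:hat_hat_omega_La_theta_vs_hat_hat_omega_theta} and proves the equivalent Theorem~\ref{t:spectrum_of_the_weak_for_lattices}: the upper bound $\hat{\hat\omega}(\La_\theta)\le 1/2$ comes from Lemma~\ref{l:the_next_hyperbolic_minimum_is_large} on successive hyperbolic minima (using $q_n>\varphi^{n-1}$), and attainment uses Lemma~\ref{l:relative_is_hyperbolic} (relative minima are hyperbolic under a decay condition on $q_k|q_k\theta-p_k|$) together with formula \eqref{eq:hat_hat_omega_in_terms_of_relative_minima}. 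The constructions essentially coincide: your $a_{k+1}=\lfloor q_k^{\alpha-1}\rfloor$ is the paper's \eqref{eq:next_partial_qoutient_definition} with exponent $\alpha-1$, and your endpoint choices (bounded partial quotients for $\gamma=1$, super-power growth for $\gamma=2$) match the paper's. What your route buys is self-containedness and a notably more elementary proof of the ceiling; what the paper's route buys is that the same hyperbolic/relative-minima machinery simultaneously yields the lattice statement and connects to the general study of $\hat{\hat\omega}(\La)$. One small imprecision to fix: the sequence $D_k=q_k|q_k\theta-p_k|$ need not be literally decreasing, so instead of claiming $\psi\equiv D_k$ on $[q_k,q_{k+1})$ you should use the two-sided bounds $q_j/(q_j+q_{j+1})<D_j<q_j/q_{j+1}$, which give $\min_{j\le k}D_j\asymp q_k^{1-\alpha}$ with constants uniform in $k$; this (together with $\psi\asymp 1$ in the case $\alpha=1$) is all your limsup computation needs, so the gap is cosmetic.
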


\section{Диофантовы экспоненты решёток}\label{sec:lattice_exponents}

Для каждого $\vec z=(z_1,\ldots,z_d)\in\R^d$ положим
\[
  |\vec z|=\max_{1\leq i\leq d}|z_i|,
  \qquad
  \Pi(\vec z)=\prod_{\begin{subarray}{c}1\leq i\leq d\end{subarray}}|z_i|^{1/d}.
\]
Определим также для каждого $d$-набора $\pmb\lambda=(\lambda_1,\ldots,\lambda_d)\in\R_+^d$ параллелепипед 
\begin{equation}\label{eq:prallelepipeds_lattice_exp}
  \cP(\pmb\lambda)=\Big\{\,\vec z=(z_1,\ldots,z_d)\in\R^d \ \Big|\ |z_i|\leq\lambda_i,\ i=1,\ldots,d \Big\}.
\end{equation}

\begin{definition} \label{def:regular_lattice_exponents}
  Пусть $\La$ --- решётка в $\R^d$ полного ранга. Супремум вещественных чисел $\gamma$, для которых существует сколь угодно большое $t$ и $d$-набор $\pmb\lambda\in\R_+^d$, удовлетворяющий неравенствам
  \[
    |\pmb\lambda|\leq t,
    \qquad
    \Pi(\pmb\lambda)\leq t^{-\gamma},
  \]
  такой что параллелепипед $\cP(\pmb\lambda)$ содержит ненулевые точки решётки $\La$, называется \emph{(регулярной) диофантовой экспонентой} решётки $\La$ и обозначается $\omega(\La)$.
\end{definition}

В отличие от задачи о совместных приближениях, равномерный аналог диофантовой экспоненты решётки можно определять по-разному, ибо в определении \ref{def:regular_lattice_exponents} квантор существования стоит как при $t$, так и при $\pmb\lambda$.

\begin{definition} \label{def:strong_uniform_lattice_exponents}
  Пусть $\La$ --- решётка в $\R^d$ полного ранга. Супремум вещественных чисел $\gamma$, таких что для любого достаточно большого $t$ и любого $d$-набора $\pmb\lambda\in\R_+^d$, удовлетворяющего равенствам
  \[
    |\pmb\lambda|=t,
    \qquad
    \Pi(\pmb\lambda)=t^{-\gamma},
  \]
  параллелепипед $\cP(\pmb\lambda)$ содержит ненулевые точки решётки $\La$, называется \emph{сильной равномерной диофантовой экспонентой} решётки $\La$ и обозначается $\hat\omega(\La)$.
\end{definition}

\begin{definition} \label{def:weak_uniform_lattice_exponents}
  Пусть $\La$ --- решётка в $\R^d$ полного ранга. Супремум вещественных чисел $\gamma$, таких что для любого достаточно большого $t$ существует $d$-набор $\pmb\lambda\in\R_+^d$, удовлетворяющий неравенствам
  \[
    |\pmb\lambda|\leq t,
    \qquad
    \Pi(\pmb\lambda)\leq t^{-\gamma},
  \]
  такой что параллелепипед $\cP(\pmb\lambda)$ содержит ненулевые точки решётки $\La$, называется \emph{слабой равномерной диофантовой экспонентой} решётки $\La$ и обозначается $\hat{\hat\omega}(\La)$.
\end{definition}

Из теоремы Минковского о выпуклом теле и определений диофантовых экспонент решёток легко следуют неравенства
\[
  \omega(\La)\geq
  \hat{\hat\omega}(\La)\geq
  \hat\omega(\La)\geq0.
\]

В работе \cite{german_lattice_exponents_spectrum} показано, что при $d=2$ спектр значений экспоненты $\omega(\La)$ совпадает с лучом $[0,+\infty]$ и что при $d\geq3$ он содержит отрезок
\[
  \bigg[3-\frac{d}{(d-1)^2}\,,\,+\infty\bigg].
\]
Конечно же, $0$ также принадлежит спектру при любом $d$, ибо $\omega(\La)=0$ всякий раз, когда функция $\Pi(\vec z)$ отделена от нуля в ненулевых точках решётки $\La$. К примеру, это так для любой решётки полного модуля вполне вещественного алгебраического расширения поля $\Q$ (см. \cite{borevich_shafarevich}). Теорема Шмидта о подпространствах позволяет строить решётки с регулярными диофантовыми экспонентами, принимающими значения
\begin{equation*}
  \frac{\,ab\,}{cd}\,,\qquad
  \begin{array}{l}
    a,b,c\in\N, \\
    a+b+c=d.
  \end{array}
\end{equation*}
Описание соответствующих решёток можно найти в работе \cite{german_lattice_transference}. Естественно ожидать, что при любом $d$ спектр значений экспоненты $\omega(\La)$ совпадает с лучом $[0,+\infty]$. Однако же при $d\geq3$ это до сих пор не доказано.

В работе \cite{german_comm_math_2023} показано, что сильные равномерные диофантовы экспоненты решёток принимают лишь тривиальные значения, то есть $0$ и $\infty$. А именно, если решётка $\La$ подобна подрешётке $\Z^d$ по модулю действия группы невырожденных диагональных операторов, то $\hat\omega(\La)=\infty$, а если нет, то $\hat\omega(\La)=0$.

Что касается слабых равномерных диофантовых экспонент решёток, спектр их значений нетривиален. Так, в работе \cite{german_izv_2025} доказано, что этот спектр в двумерном случае совпадает с лучом $[0,\infty]$. В многомерном случае также естественно ожидать, что спектр значений экспоненты $\hat{\hat\omega}(\La)$ совпадает с лучом $[0,+\infty]$. Однако же при $d\geq3$ это не доказано.

\section{Цепные дроби и решётки в $\R^2$}\label{sec:continued_fractions_and_lattices}

\subsection{Связь регулярной диофантовой экспоненты вещественного числа с ростом его неполных частных}

Для каждого $\theta\in\R$ в соответствии с определением \ref{def:belpha_one_number} экспонента $\omega(\theta)$ равна супремуму таких вещественных $\gamma$, что неравенство
\[
  |\theta x-y|\leqslant|x|^{-\gamma}
\]
имеет бесконечно много решений в ненулевых целых $x$, $y$.

Благодаря классическому соотношению
\begin{equation}\label{eq:omega_of_a_number_vs_its_continued_fraction}
  \omega(\theta)=1+\limsup_{k\to+\infty}\frac{\ln a_{k+1}}{\ln q_k},
\end{equation}
связывающему $\omega(\theta)$ с цепной дробью числа $\theta$,
\[
  \theta=[a_0;a_1,a_2,\ldots],
  \qquad
  \frac{p_k}{q_k}=[a_0;a_1,\ldots,a_k],
\]
можно явно строить числа с любой заданной наперёд диофантовой экспонентой $\omega(\theta)\geq1$. Действительно, если для заданного $\gamma\geq0$ положить $a_0=1$ и для каждого $k\geq0$ положить
\[
  a_{k+1}=[q_k^\gamma]+1,
\]
где $[\,\cdot\,]$ обозначает целую часть, то из \eqref{eq:omega_of_a_number_vs_its_continued_fraction} очевидно, что для такого $\theta$ справедливо $\omega(\theta)=1+\gamma$.

\subsection{Регулярные диофантовы экспоненты вещественных чисел и решёток}

Покажем, как связаны регулярные диофантовы экспоненты решёток в $\R^2$ с регулярными диофантовыми экспонентами вещественных чисел.

Для любой решётки $\La$ полного ранга в соответствии с определением \ref{def:regular_lattice_exponents} экспонента $\omega(\La)$ равна супремуму таких вещественных $\gamma$, что неравенство
\[
  \Pi(\vec z)\leq|\vec z|^{-\gamma}
\]
имеет бесконечно много решений в $\vec z\in\La$.

Пусть $\theta$, $\eta$ --- различные вещественные числа, $\theta>1$, $\eta>1$. Рассмотрим линейные формы $L_1$, $L_2$ от двух переменных с коэффициентами, записанными в строчках матрицы
\[
  A=
  \begin{pmatrix}
    \theta & -1 \\
    1 & \phantom{-}\eta
  \end{pmatrix},
\]
и положим
\[
  \La=A\Z^2=
  \Big\{\big(L_1(\vec u),L_2(\vec u)\big)\,\Big|\,\vec u\in\Z^2 \Big\}.
\]
Тогда для каждого $\,\vec z=\big(L_1(\vec u),L_2(\vec u)\big)\in\La$, где $\ \vec u=(x,y)\in\Z^2$, справедливо
\[
  \Pi(\vec z)^2=|L_1(\vec u)|\cdot|L_2(\vec u)|=|\theta x-y|\cdot|x+\eta y|,
\]
а также
\[
  |\vec z|\asymp|L_i(\vec u)|\asymp|\vec u|\asymp|x|\asymp|y|
  \ \text{ при }\ |L_j(\vec u)|\leq1,\ i\neq j,
\]
где константы, подразумеваемые символом ``$\asymp$'', зависят лишь от $\theta$ и $\eta$. Следовательно, при $|x+\eta y|\leq1$ справедливо
\[
  \big(\Pi(\vec z)\cdot|\vec z|^\gamma\big)^2
  \asymp
  |\theta x-y|\cdot|x|^{1+2\gamma},
\]
тогда как при $|\theta x-y|\leq1$ справедливо
\[
  \big(\Pi(\vec z)\cdot|\vec z|^\gamma\big)^2
  \asymp
  |x+\eta y|\cdot|y|^{1+2\gamma}.
\]
Отсюда видим, что 
\begin{equation}\label{eq:omega_La_vs_the_maximum}
  \omega(\La)=\frac12\Big(\max\big(\omega(\theta),\omega(\eta)\big)-1\Big).
\end{equation}

При $\omega(\eta)=\omega(\theta)$ пропадает необходимость брать максимум. Особенно удобным для некоторых приложений является выбор $\eta=\theta$. Тогда для решётки
\begin{equation}\label{eq:La_theta}
  \La_\theta=
  \begin{pmatrix}
    \theta & -1 \\
    1 & \phantom{-}\theta
  \end{pmatrix}
  \Z^2
\end{equation}
соотношение \eqref{eq:omega_La_vs_the_maximum} сводится к равенству
\begin{equation}\label{eq:omega_La_vs_no_maximum}
  \omega(\La_\theta)=
  \frac{\omega(\theta)-1}2\,,
\end{equation}
из которого очевидным образом следует, что спектр значений экспоненты $\omega(\La)$ совпадает с отрезком $[0,\infty]$.

\begin{remark*}
  Решётка $\La_\theta$ переходит в себя при повороте $\R^2$ вокруг начала координат на $90^\circ$. Данная особенность решётки $\La_\theta$ является весьма удобной для различных приложений.
\end{remark*}

\subsection{Слабые равномерные диофантовы экспоненты вещественных чисел и решёток}

Для любой решётки $\La$ полного ранга в соответствии с определением \ref{def:weak_uniform_lattice_exponents} экспонента $\hat{\hat\omega}(\La)$ равна супремуму таких вещественных $\gamma$, что для каждого достаточно большого $t$ система неравенств
\begin{equation}\label{eq:weak_uniform_lattice_exponents_system}
  |\vec z|\leq t,
  \qquad
  \Pi(\vec z)\leq t^{-\gamma}
\end{equation}
имеет решение в ненулевых $\vec z\in\La$.


Пусть $\theta>1$. Покажем, как связаны экспоненты $\hat{\hat\omega}(\theta)$ и $\hat{\hat\omega}(\La_\theta)$, где $\La_\theta$ определяется равенством \eqref{eq:La_theta}. Поскольку при $|q\theta-p|\leq1$
\[
  q+\theta p\asymp q,
\]
где константы, подразумеваемые символом ``$\asymp$'', зависят лишь от $\theta$, экспонента $\hat{\hat\omega}(\theta)$ в соответствии с определением \ref{def:weak_uniform_real_number_exponent} равна супремуму вещественных чисел $\gamma$, таких что для любого достаточно большого $t$ существуют не равные одновременно нулю целые числа $p$, $q$, удовлетворяющие неравенствам
\[
  |q+\theta p|\leq t,
  \qquad
  |(q+\theta p)(q\theta-p)|\leq t^{1-\gamma}.
\]
Учитывая же инвариантность решётки $\La_\theta$ относительно поворота $\R^2$ вокруг начала координат на $90^\circ$, получаем, что $\hat{\hat\omega}(\theta)$ равна супремуму таких вещественных $\gamma$, что для каждого достаточно большого $t$ система неравенств
\begin{equation}\label{eq:weak_uniform_exponents_for_a_real_number_system}
  |\vec z|\leq t,
  \qquad
  \Pi(\vec z)\leq t^{(1-\gamma)/2}
\end{equation}
имеет решение в ненулевых $\vec z\in\La_\theta$. Стало быть, имеет место аналог равенства \eqref{eq:omega_La_vs_no_maximum}:
\begin{equation}\label{eq:hat_hat_omega_La_theta_vs_hat_hat_omega_theta}
  \hat{\hat\omega}(\La_\theta)=
  \frac{\hat{\hat\omega}(\theta)-1}2\,.
\end{equation}

Ввиду соотношения \eqref{eq:hat_hat_omega_La_theta_vs_hat_hat_omega_theta} теорема \ref{t:spectrum_of_the_weak_for_a_real_number} равносильна следующему утверждению.

\begin{theorem}\label{t:spectrum_of_the_weak_for_lattices}
  Если $\theta\in\Q$, то $\hat{\hat\omega}(\La_\theta)=\infty$. Спектр значений экспоненты $\hat{\hat\omega}(\La_\theta)$ для иррациональных $\theta$ совпадает с отрезком $[0,1/2]$.
\end{theorem}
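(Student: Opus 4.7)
\emph{План доказательства.} Вместо теоремы \ref{t:spectrum_of_the_weak_for_lattices} я бы доказывал эквивалентную ей теорему \ref{t:spectrum_of_the_weak_for_a_real_number} напрямую, работая с цепной дробью $\theta=[a_0;a_1,a_2,\ldots]$ и её подходящими дробями $p_k/q_k$. По теории наилучших приближений, при $t\in[q_j,q_{j+1})$ минимум
\[
  m(t)=\min_{\substack{1\leq q\leq t\\ p\in\Z}}q|q\theta-p|
\]
достигается на подходящей дроби с некоторым номером $i\leq j$ и равен, с точностью до ограниченных множителей, $\min_{i\leq j}q_i|q_i\theta-p_i|\asymp 1/\max_{i\leq j}a_{i+1}$. Обозначив $A_j=\max(a_1,\ldots,a_{j+1})$, получаем, что наихудшее $t\in[q_j,q_{j+1})$ лежит вблизи правого конца этого отрезка и что условие $m(t)\leq t^{1-\gamma}$ при таких $t$ равносильно $A_j\geq q_{j+1}^{\gamma-1-o(1)}$. Отсюда
\[
  \hat{\hat\omega}(\theta)=1+\liminf_{j\to\infty}\frac{\ln A_j}{\ln q_{j+1}},
\]
и тривиальное неравенство $a_{j+1}\leq q_{j+1}$, влекущее $A_j\leq q_{j+1}$, немедленно даёт верхнюю оценку $\hat{\hat\omega}(\theta)\leq 2$.

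Далее я бы явно предъявлял, для каждого $\mu\in[1,2]$, число $\theta$ с $\hat{\hat\omega}(\theta)=\mu$, задавая его неполные частные индуктивно. При $\mu=1$ достаточно взять $\theta$ с ограниченными неполными частными (например, $\theta=(\sqrt{5}-1)/2$): тогда $A_j$ ограничено, $q_{j+1}\to\infty$, а потому $\liminf=0$. При $\mu\in(1,2)$ положу $\beta=(\mu-1)/(2-\mu)>0$ и определю $a_{k+1}=\lfloor q_k^\beta\rfloor+1$. Тогда $q_{k+1}\asymp q_k^{1+\beta}$, максимум $A_k$ достигается на последнем неполном частном, $A_k=a_{k+1}\asymp q_k^\beta$, и $\ln A_k/\ln q_{k+1}\to\beta/(1+\beta)=\mu-1$. При $\mu=2$ возьму сверхэкспоненциально растущую последовательность, например, $a_{k+1}=\lceil q_k^k\rceil$; тогда $\ln q_{k+1}\asymp k\ln q_k$, $\ln A_k\geq k\ln q_k$, и отношение стремится к $1$.

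Главная техническая трудность --- аккуратный учёт поправок $o(1)$ в характеризации $\hat{\hat\omega}(\theta)$ через $\liminf$ и проверка того, что для построенных $\theta$ достигается именно заявленное значение, а не какое-то большее (то есть справедлива и верхняя оценка $\liminf\leq\mu-1$). Последнее обеспечивается тем, что в явной конструкции все $a_{i+1}$ растут с нужной скоростью и $A_k$ не <<перескакивает>> благодаря какому-либо особенно большому $a_i$ с меньшим индексом. На языке решёток те же соображения естественно переформулируются в терминах относительных минимумов решётки $\La_\theta$, намеченных в параграфе \ref{sec:relative_and_hyperbolic_minima}, что, по-видимому, и составит содержание параграфа \ref{sec:the_proof}.
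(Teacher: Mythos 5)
Ваше рассуждение по существу корректно, но идёт другим путём, нежели статья. Вы сводите всё к равносильной теореме \ref{t:spectrum_of_the_weak_for_a_real_number} и работаете напрямую с наилучшими приближениями: из того, что при $q_j\leq t<q_{j+1}$ минимум $\min_{1\leq q\leq t,\,p\in\Z}q|q\theta-p|$ достигается на подходящих дробях и потому сравним с $1/A_j$, где $A_j=\max(a_1,\ldots,a_{j+1})$, вы выводите замкнутую формулу $\hat{\hat\omega}(\theta)=1+\liminf_{j\to\infty}\frac{\ln A_j}{\ln q_{j+1}}$; тогда верхняя граница $\hat{\hat\omega}(\theta)\leq2$ следует из тривиального $a_i\leq q_i$, а реализация всех значений из $[1,2]$ --- из конструкций, по существу совпадающих со статейными (ваша параметризация $\beta=(\mu-1)/(2-\mu)$ --- это в точности $\gamma$ из \eqref{eq:next_partial_qoutient_definition} после пересчёта по \eqref{eq:hat_hat_omega_La_theta_vs_hat_hat_omega_theta}, а случаи $\mu=1$ и $\mu=2$ обрабатываются так же, как $0$ и $1/2$ в статье). Статья же доказывает непосредственно теорему \ref{t:spectrum_of_the_weak_for_lattices}: роль вашей характеризации играют формула \eqref{eq:hat_hat_omega_in_terms_of_hyperbolic_minima} через последовательные гиперболические минимумы вместе с леммами \ref{l:hyperbolic_is_also_relative} и \ref{l:relative_is_hyperbolic} (при условии убывания $q_k|q_k\theta-p_k|$ относительные минимумы, то есть вершины полигонов Клейна решётки $\La_\theta$, оказываются гиперболическими), а верхняя оценка $1/2$ получается отдельно из леммы \ref{l:the_next_hyperbolic_minimum_is_large}. Ваш путь элементарнее и даёт формулу для $\hat{\hat\omega}(\theta)$ общего вида, представляющую самостоятельный интерес (в частности, оценка $\leq2$ получается одной строкой); путь статьи остаётся внутри решёточного формализма, который мотивирует задачу и переносится на старшие размерности. Чтобы довести ваш план до полного доказательства, остаётся (i) аккуратно провести поглощение констант (множителей типа $A_j+2$ и ограниченных множителей из $\asymp$) при переходе к $\liminf$ --- это действительно проходит, поскольку они съедаются любым $\varepsilon>0$ в показателе; (ii) отметить, что в ваших конструкциях максимум $A_j$ достигается именно на последнем неполном частном (неполные частные монотонно растут к бесконечности), так что $\liminf$ равен заявленному значению, а не меньше; (iii) упомянуть тривиальный случай $\theta\in\Q$, дающий $\infty$, и то, что при переходе к решёточной формулировке можно считать $\theta>1$, ибо сдвиг $\theta$ на целое число не меняет $\hat{\hat\omega}(\theta)$.
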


\begin{remark*}
  Как было сказано выше, спектр слабых равномерных диофантовых экспонент решёток в двумерном случае совпадает с лучом $[0,\infty]$. По утверждению теоремы \ref{t:spectrum_of_the_weak_for_lattices} легко заметить, что решёток вида \eqref{eq:La_theta} недостаточно для описания полного спектра значений экспоненты $\hat{\hat\omega}(\La)$. Как показано в работе \cite{german_izv_2025}, для изучения полного спектра экспоненты $\hat{\hat\omega}(\La)$ нужно рассматривать пары различных чисел $\theta$, $\eta$, диофантовы свойства которых связаны определённым образом друг с другом.
\end{remark*}

\section{Относительные и гиперболические минимумы}\label{sec:relative_and_hyperbolic_minima}

\begin{definition}
  Пусть $\La$ --- решётка полного ранга в $\R^d$. Точка $\vec z=(z_1,\ldots,z_d)\in\La$ называется \emph{относительным минимумом} решётки $\La$, если не существует таких ненулевых точек $\vec w=(w_1,\ldots,w_d)\in\La$, что
  \[
    |w_i|\leq|z_i|,\quad i=1,\ldots,d,
    \qquad\text{ и }\qquad
    \sum_{i=1}^d|w_i|<\sum_{i=1}^d|z_i|.
  \] 
\end{definition}

Иными словами, точка $\vec z\in\La$ является относительным минимумом решётки $\La$, если в параллелепипеде $\cP(\vec z)$, помимо вершин и точки начала координат, точек решётки нет.

\begin{definition}\label{def:hyperbolic_minimum}
  Пусть $\La$ --- решётка полного ранга в $\R^d$. Будем называть точку $\vec z\in\La$ \emph{гиперболическим минимумом} решётки $\La$, если не существует таких ненулевых точек $\vec w\in\La$, что 
  \[
    |\vec w|\leq|\vec z|
    \qquad\text{ и }\qquad
    \Pi(\vec w)<\Pi(\vec z).
  \] 
\end{definition}

Положим
\[
  \cH(\vec z)=\Big\{\vec w\in\R^d \,\Big|\, |\vec w|\leq|\vec z|,\ \Pi(\vec w)<\Pi(\vec z) \Big\}.
\]
Из определения \ref{def:hyperbolic_minimum} очевидно, что ненулевая точка $\vec z$ решётки $\La$ является \emph{гиперболическим минимумом} этой решётки тогда и только тогда, когда $\cH(\vec z)\cap\La=\{\vec 0\}$.

\begin{lemma}\label{l:hyperbolic_is_also_relative}
  Каждый гиперболический минимум решётки $\La$ является и относительным минимумом этой решётки.
\end{lemma}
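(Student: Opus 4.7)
My plan is to prove the lemma by contrapositive: I would show that if $\vec z\in\La$ is not a relative minimum, then $\vec z$ is automatically not a hyperbolic minimum. Unpacking the hypothesis directly hands me a candidate witness — a nonzero $\vec w\in\La$ with $|w_i|\leq|z_i|$ for each $i$ and $\sum_{i}|w_i|<\sum_{i}|z_i|$ — and the goal reduces to verifying that this same $\vec w$ lies in $\cH(\vec z)$.

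The coordinate-wise bounds $|w_i|\leq|z_i|$ immediately yield $|\vec w|=\max_i|w_i|\leq\max_i|z_i|=|\vec z|$, so the first half of the definition of $\cH(\vec z)$ costs nothing. The only substantive step is to upgrade the \emph{additive} strict inequality $\sum|w_i|<\sum|z_i|$ to the \emph{multiplicative} strict inequality $\prod|w_i|<\prod|z_i|$, which after taking $d$-th roots is exactly $\Pi(\vec w)<\Pi(\vec z)$.

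For this I would introduce the ratios $r_i=|w_i|/|z_i|\in[0,1]$, working under the natural assumption that all coordinates of $\vec z$ are nonzero (if some $z_i=0$ then $\Pi(\vec z)=0$ and the hyperbolic-minimum condition is vacuous, so there is nothing to prove). The strict sum inequality forces at least one $r_{i_0}$ to be strictly less than $1$, while every remaining $r_j$ is at most $1$; consequently
\[
  \prod_{i=1}^d r_i=r_{i_0}\prod_{j\neq i_0}r_j\leq r_{i_0}<1,
\]
which is the desired strict multiplicative inequality. The produced $\vec w$ is then a nonzero element of $\cH(\vec z)\cap\La$, contradicting the hyperbolic-minimum property of $\vec z$.

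I do not foresee any genuine obstacle: the whole argument reduces to a short chain of elementary inequalities comparing sums and products of bounded nonnegative numbers. The only point that merits explicit comment is the degenerate case of a vanishing coordinate of $\vec z$, which lies outside the scope of the hyperbolic-minimum notion in any meaningful sense.
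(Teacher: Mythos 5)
Your contrapositive argument is essentially the paper's own proof read in reverse: both come down to the observation that a nonzero lattice point of the box $\cP(\vec z)$ that is not a vertex satisfies $|\vec w|\leq|\vec z|$ and $\Pi(\vec w)<\Pi(\vec z)$, hence lies in $\cH(\vec z)$, and your ratio computation $\prod_i r_i\leq r_{i_0}<1$ establishes exactly this, so the proof is correct. One small caveat about your parenthetical: if some $z_i=0$, then $\vec z$ is \emph{vacuously} a hyperbolic minimum yet need not be a relative minimum (take $(0,5)\in\Z^2$), so in your contrapositive framing this is not a case with nothing to prove but a case that must be excluded outright --- which is harmless here, since the paper only uses hyperbolic minima for lattices on which $\Pi$ does not vanish at nonzero points, and its own proof tacitly assumes the same.
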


\begin{proof}
  Если $\vec z$ --- гиперболический минимум решётки $\La$, ненулевые точки этой решётки, содержащиеся в $\cH(\vec z)$, обязаны располагаться на пересечении границы $\cH(\vec z)$ с поверхностью $\big\{ \vec w\in\R^d \,\big|\, \Pi(\vec w)=\Pi(\vec z) \big\}$. В частности, в этом случае любая ненулевая точка решётки $\La$, содержащаяся в $\cP(\vec z)$, обязана быть вершиной $\cP(\vec z)$. Таким образом, каждый гиперболический минимум решётки $\La$, действительно, является и относительным минимумом этой решётки.
\end{proof}

\subsection{Последовательные гиперболические минимумы}

Если $\vec z$ --- гиперболический минимум решётки $\La$, то, конечно же, точка $-\vec z$ также является гиперболическим минимумом этой решётки. Может так оказаться, что существуют другие гиперболические минимумы с такими же значениями функционалов $|\cdot|$ и $\Pi(\,\cdot\,)$. Выберем из каждого такого набора по одному представителю и упорядочим их по возрастанию $|\cdot|$. Получим последовательность $\vec z_1,\vec z_2,\vec z_3,\ldots$ гиперболических минимумов (см. рис. \ref{fig:successive_hyperbolic_minima}), причём $|\vec z_k|<|\vec z_{k+1}|$ и $\hat\cH(\vec z_k)\cap\La=\{\vec 0\}$, где
\[
  \hat\cH(\vec z_k)=\Big\{\vec w\in\R^d \,\Big|\, |\vec w|<|\vec z_{k+1}|,\ \Pi(\vec w)<\Pi(\vec z_k) \Big\}.
\]

\begin{figure}[h]
\centering
\begin{tikzpicture}[domain=-5.1:5.1,scale=1.3]

  \draw (-5.3,0) -- (5.3,0); 
  \draw (0,-5.3) -- (0,5.3); 

  \fill[fill=blue,opacity=0.15]
      plot [domain=0.2:5] (\x,{1/(\x)}) --
      plot [domain=5:0.2] (\x, {-1/(\x)}) --
      plot [domain=-0.2:-5] (\x,{1/(\x)}) --
      plot [domain=-5:-0.2] (\x, {-1/(\x)}) --
      cycle;

  \fill[fill=red,opacity=0.15]
      plot [domain=0.75:2] (\x,{1.5/(\x)}) --
      plot [domain=2:0.75] (\x, {-1.5/(\x)}) --
      plot [domain=-0.75:-2] (\x,{1.5/(\x)}) --
      plot [domain=-2:-0.75] (\x, {-1.5/(\x)}) --
      cycle;

  \draw[color=blue]
      plot [domain=0.2:5] (\x,{1/(\x)}) --
      plot [domain=5:0.2] (\x, {-1/(\x)}) --
      plot [domain=-0.2:-5] (\x,{1/(\x)}) --
      plot [domain=-5:-0.2] (\x, {-1/(\x)}) --
      cycle;

  \draw[color=red]
      plot [domain=0.75:2] (\x,{1.5/(\x)}) --
      plot [domain=2:0.75] (\x, {-1.5/(\x)}) --
      plot [domain=-0.75:-2] (\x,{1.5/(\x)}) --
      plot [domain=-2:-0.75] (\x, {-1.5/(\x)}) --
      cycle;

  \node[fill=black,circle,inner sep=1.3pt] at (0,0) {};
  \node[fill=black,circle,inner sep=1.3pt] at (1.03,1.5/1.03) {};
  \node[fill=black,circle,inner sep=1.3pt] at (-1.03,-1.5/1.03) {};
  \node[fill=black,circle,inner sep=1.3pt] at (-0.5,2) {};
  \node[fill=black,circle,inner sep=1.3pt] at (0.5,-2) {};
  \node[fill=black,circle,inner sep=1.3pt] at (5,0.1) {};
  \node[fill=black,circle,inner sep=1.3pt] at (-5,-0.1) {};
  
  \draw(1.1,1.5/1.1+0.1) node[right]{$\vec z_{k-1}$};
  \draw(-1.03,-1.5/1.1-0.1) node[left]{$-\vec z_{k-1}$};
  \draw(0.47,-2.23) node[right]{$\vec z_k$};
  \draw(-0.47,2.23) node[left]{$-\vec z_k$};
  \draw(5,0.2) node[right]{$\vec z_{k+1}$};
  \draw(-4.93,-0.2) node[left]{$-\vec z_{k+1}$};

  \draw(4,0.3) node[above]{${\color[rgb]{0,0,1}\hat\cH(\vec z_k)}$};
  \draw(1.3,-1.35) node[right]{${\color[rgb]{1,0,0}\hat\cH(\vec z_{k-1})}$};

\end{tikzpicture}
\caption{Последовательные гиперболические минимумы}\label{fig:successive_hyperbolic_minima}
\end{figure}

Если функционал $\Pi(\,\cdot\,)$ принимает сколь угодно малые значения на ненулевых точках решётки $\La$, но при этом ни на какой такой точке не обращается в ноль, гиперболических минимумов бесконечно много. В этом случае для экспонент $\omega(\La)$, $\hat{\hat\omega}(\La)$ справедливы равенства
\[
  \omega(\La)=\sup\Big\{ \gamma\in\R \,\Big|\, \forall\,K\in\N\,\ \exists\,k\geq K:\,\Pi(\vec z_k)\leq|\vec z_k|^{-\gamma} \Big\},\ \
\]
\begin{equation}\label{eq:hat_hat_omega_in_terms_of_hyperbolic_minima}
  \hat{\hat\omega}(\La)=\sup\Big\{ \gamma\in\R \,\Big|\, \exists\,K\in\N:\,\forall k\geq K\ \Pi(\vec z_k)\leq|\vec z_{k+1}|^{-\gamma} \Big\}.
\end{equation}

\subsection{Относительные и гиперболические минимумы для решёток вида $\La_\theta$}

Относительный минимум решётки, вообще говоря, не обязан быть гиперболическим минимумом. Тем не менее, при определённых условиях удаётся доказать, что относительные минимумы решётки являются также гиперболическими минимумами. Например, в двумерном случае для решёток вида $\La_\theta$ с некоторыми условиями на рост неполных частных числа $\theta$.

Пусть далее $d=2$. Тогда, как показано, например, в работе \cite{german_mathnotes_2006}, множество относительных минимумов решётки $\La_\theta=A\Z^2$, где
\begin{equation}\label{eq:A_theta}
  A=
  \begin{pmatrix}
    \theta & -1 \\
    1 & \phantom{-}\theta
  \end{pmatrix},
  \qquad
  \theta>1,
\end{equation}
совпадает с множеством вершин её четырёх полигонов Клейна
\[
  \cK_{\pmb\e}(\La_\theta)=
  \conv\Big(\Big\{ \vec z=(z_1,z_2)\in\La_\theta\backslash\{\vec 0\} \,\Big|\, \e_iz_i\geq0,\ i=1,2 \Big\}\Big),
\]
\[
  \pmb\e=(\e_1,\e_2),
  \quad
  \e_1,\e_2=\pm1.
\]
Вершины же этих полигонов Клейна соответствуют подходящим дробям числа $\theta$. А именно, их вершины являются образами при действии оператора $A$ точек
\begin{equation}\label{eq:vertices}
  \pm(1,0),\quad
  \pm(0,1),\quad
  \pm(q_k,p_k),\quad
  \pm(p_k,-q_k),\quad
  k=0,1,2,\ldots,
\end{equation}
где $p_k/q_k$ --- подходящие дроби числа $\theta$. Подробное описание данного соответствия можно найти в работе \cite{german_tlyustangelov_MJCNT_2016}, а также в обзоре \cite{german_UMN_2023}.

В частности, ввиду леммы \ref{l:hyperbolic_is_also_relative} все гиперболические минимумы решётки $\La$ являются образами при действии оператора $A$ каких-то точек вида \eqref{eq:vertices}.

\begin{lemma}\label{l:relative_is_hyperbolic}
  Пусть $\theta\in\R\backslash\Q$, $1<\theta<2$. Пусть для числителей и знаменателей подходящих дробей числа $\theta$ при всех достаточно больших $k$ справедливы неравенства
  \[
    q_k|q_k\theta-p_k|
    >3
    q_{k+1}|q_{k+1}\theta-p_{k+1}|.
  \]
  Пусть $\La_\theta=A\Z^2$, где $A$ определяется равенством \eqref{eq:A_theta}. Тогда любой относительный минимум $\vec z$ решётки $\La_\theta$ с достаточно большим $|\vec z|$ является и гиперболическим минимумом этой решётки.
\end{lemma}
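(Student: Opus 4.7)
The plan is to combine the explicit description of relative minima of $\La_\theta$ quoted before the lemma with a monotonicity argument driven by the hypothesis. Throughout I write $\eta_j=\theta q_j-p_j$ and $\xi_j=q_j+\theta p_j$, so that $A(q_j,p_j)=(\eta_j,\xi_j)$ and $A(p_j,-q_j)=(\xi_j,-\eta_j)$; these two points are $90^\circ$-rotates of one another up to sign, and both have sup-norm $\xi_j$ and satisfy $\Pi(\,\cdot\,)^2=\xi_j|\eta_j|$. Using the $90^\circ$-invariance of $\La_\theta$ and sign flips, all relative minima of $\La_\theta$ can then be treated as a single sequence $\vec z_j=A(q_j,p_j)$, $j=-1,0,1,\ldots\,$, with the convention $(q_{-1},p_{-1})=(0,1)$.

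First, I would reduce the hyperbolic check to a comparison against relative minima only. For any nonzero $\vec w\in\La_\theta$ pick $\vec w'\in\La_\theta\setminus\{\vec 0\}$ minimising $|w_1'|+|w_2'|$ among lattice points with $|w_i'|\leq|w_i|$; such $\vec w'$ is automatically a relative minimum, and clearly $|\vec w'|\leq|\vec w|$ and $\Pi(\vec w')\leq\Pi(\vec w)$. Hence, to establish $\cH(\vec z_k)\cap\La_\theta=\{\vec 0\}$ for $\vec z_k=A(q_k,p_k)$, it suffices to verify $\Pi(\vec w')\geq\Pi(\vec z_k)$ for every relative minimum $\vec w'$ with $|\vec w'|\leq|\vec z_k|$. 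Since $\xi_j$ is strictly increasing in $j$, the relative minima in question are, modulo the symmetries above, exactly the $\vec z_j$ with $j\leq k$, so the lemma reduces to the purely numerical assertion
\[
  \xi_j|\eta_j|>\xi_k|\eta_k|\qquad\text{for all }\ j<k,
\]
valid for every sufficiently large $k$; the case $j=k$ gives equality, which handles the rotated and sign-flipped copies of $\vec z_k$ as well (strictness of $\Pi<\Pi(\vec z_k)$ in the definition of $\cH(\vec z_k)$).

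Finally, I would derive this geometric decay from the hypothesis. From $p_j/q_j\to\theta$ and the identity $\xi_j=(1+\theta^2)q_j-\theta\eta_j$ one gets $\xi_j/q_j\to 1+\theta^2$; combined with $q_j|\eta_j|>3q_{j+1}|\eta_{j+1}|$ this yields $\xi_j|\eta_j|>2\xi_{j+1}|\eta_{j+1}|$ for all $j$ sufficiently large, so the sequence $(\xi_j|\eta_j|)$ decays at least geometrically past some index $j_0$. Iterating gives the desired inequality for all $j\in[j_0,k)$, and the finitely many smaller $j$ are absorbed by noting that $\xi_j|\eta_j|$ is bounded below whereas $\xi_k|\eta_k|\to 0$. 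The only real technical point is upgrading the factor $3$ in the hypothesis into a genuine decay for $\xi_j|\eta_j|$; the factor $3$ leaves ample room because the error $|\eta_j|/q_j$ is $o(1)$, so this step is routine rather than a substantive obstacle.
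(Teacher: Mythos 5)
Your proposal is correct, but it settles the crucial point by a different mechanism than the paper. The shared part is the first step: turning the hypothesis $q_k|q_k\theta-p_k|>3\,q_{k+1}|q_{k+1}\theta-p_{k+1}|$ into geometric decay of $\Pi(\vec v_k)^2=\big|(q_k\theta-p_k)(q_k+p_k\theta)\big|$ --- the paper's \eqref{eq:product_decaying}, obtained there with explicit constants from $1<\theta<2$, while you use the asymptotics $\xi_j/q_j\to1+\theta^2$; both are fine. Where you diverge is in proving that $\cH(\vec z)$ contains no nonzero lattice points: you first show that any nonzero lattice point in a coordinate box dominates (coordinate-wise) a relative minimum, so it suffices to test relative minima; then you invoke the completeness of the classification \eqref{eq:vertices} to enumerate all relative minima of sup-norm at most $|\vec z|$, and dispose of them by the purely numerical monotonicity $\xi_j|\eta_j|>\xi_k|\eta_k|$ for $j<k$, the finitely many initial indices being harmless because $\xi_k|\eta_k|\to0$. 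The paper instead argues by induction along the $\vec v_k$: it chooses $m$ at which the product is minimal among $0\le k\le m$ (so $\vec v_m$ is a hyperbolic minimum) and then proves $\vec v_{m+1}$ is one by covering $\cH(\vec v_{m+1})$ by $\cH(\vec v_m)$, a box $\cP$ and its $90^\circ$-rotation, the box being emptied of lattice points via the classical continued-fraction parallelogram $\cQ$ and a lattice-point-free disc. Your route is shorter and reduces everything to one inequality, at the price of relying fully on the quoted classification of relative minima (including its completeness); since the paper itself states and uses that classification (to identify relative minima with $|z_1|\le1$), this reliance is legitimate, whereas the paper's covering argument buys a more self-contained geometric verification that only needs the best-approximation property of convergents in the inductive step.
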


\begin{proof}
  Поскольку $1<\theta<2$ и $1\leq p_k/q_k\leq2$ для каждого $k$, справедливы неравенства
  \[
    2q_k<q_k+p_k\theta<5q_k.
  \]
  Стало быть, существует такое $K$, что для всех $k>K$ выполняется
  \begin{multline}\label{eq:product_decaying}
    \big|(q_k+p_k\theta)(q_k\theta-p_k)\big|>
    2q_k|q_k\theta-p_k|
    > \\ >
    6q_{k+1}|q_{k+1}\theta-p_{k+1}|>
    \frac65\,
    \big|(q_{k+1}+p_{k+1}\theta)(q_{k+1}\theta-p_{k+1})\big|.
  \end{multline}
  Пусть $\vec z=(z_1,z_2)$ --- относительный минимум решётки $\La_\theta$ с $|z_1|\leq1$. Тогда существует такой индекс $k$, что $z_1=q_k\theta-p_k$ и $z_2=q_k+p_k\theta$. Ввиду \eqref{eq:product_decaying} найдётся такое $m>K$, что
  \[
    \big|(q_m\theta-p_m)(q_m+p_m\theta)\big|=
    \min_{0\leq k\leq m}
    \big|(q_k\theta-p_k)(q_k+p_k\theta)\big|.
  \]
  Учитывая, что решётка $\La_\theta$ переходит в себя при повороте $\R^2$ вокруг начала координат на $90^\circ$, получаем, что точка $\vec v_m=(q_m\theta-p_m,q_m+p_m\theta)$ является гиперболическим минимумом решётки $\La_\theta$.
  
  Покажем, что точка $\vec v_{m+1}=(q_{m+1}\theta-p_{m+1},q_{m+1}+p_{m+1}\theta)$ также является гиперболическим минимумом. Рассмотрим прямоугольник
  \[
    \cP=\Big\{(z_1,z_2)\in\R^2 \,\Big|\, |z_1|\leq|q_m\theta-p_m|,\ |z_2|\leq q_{m+1}+p_{m+1}\theta \Big\}
  \]
  \big(то есть $\cP=\cP(\pmb\lambda)$, где $\pmb\lambda=(|q_m\theta-p_m|,q_{m+1}+p_{m+1}\theta)$\big) и параллелограмм
  \[
    \cQ=\Big\{(x,y)\in\R^2 \,\Big|\, |x\theta-y|\leq|q_m\theta-p_m|,\ |x|\leq q_{m+1} \Big\}.
  \]
  
  \begin{figure}[h]
\centering
\begin{tikzpicture}[domain=-5.1:5.1,scale=1.3]

  \draw[->,>=stealth'] (-5.5,0) -- (5.5,0) node[right] {$z_2$};
  \draw[->,>=stealth'] (0,-5.5) -- (0,5.5) node[above] {$z_1$};

  \fill[fill=blue,opacity=0.15]
      plot [domain=0.2:5] (\x,{1/(\x)}) --
      plot [domain=5:0.2] (\x, {-1/(\x)}) --
      plot [domain=-0.2:-5] (\x,{1/(\x)}) --
      plot [domain=-5:-0.2] (\x, {-1/(\x)}) --
      cycle;

  \fill[fill=red,opacity=0.15]
      plot [domain=0.75:2] (\x,{1.5/(\x)}) --
      plot [domain=2:0.75] (\x, {-1.5/(\x)}) --
      plot [domain=-0.75:-2] (\x,{1.5/(\x)}) --
      plot [domain=-2:-0.75] (\x, {-1.5/(\x)}) --
      cycle;

  \fill[fill=teal,opacity=0.1]
      (-5,0.75) -- (5,0.75) -- (5,-0.75) -- (-5,-0.75) -- cycle;

  \fill[fill=teal,opacity=0.1]
      (0.75,-5) -- (0.75,5) -- (-0.75,5) -- (-0.75,-5) -- cycle;

  \draw[color=blue]
      plot [domain=0.2:5] (\x,{1/(\x)}) --
      plot [domain=5:0.2] (\x, {-1/(\x)}) --
      plot [domain=-0.2:-5] (\x,{1/(\x)}) --
      plot [domain=-5:-0.2] (\x, {-1/(\x)}) --
      cycle;

  \draw[color=red]
      plot [domain=0.75:2] (\x,{1.5/(\x)}) --
      plot [domain=2:0.75] (\x, {-1.5/(\x)}) --
      plot [domain=-0.75:-2] (\x,{1.5/(\x)}) --
      plot [domain=-2:-0.75] (\x, {-1.5/(\x)}) --
      cycle;
  
  \draw[color=teal] (-5,0.75) -- (5,0.75) -- (5,-0.75) -- (-5,-0.75) -- cycle;

  \draw[color=teal] (0.75,-5) -- (0.75,5) -- (-0.75,5) -- (-0.75,-5) -- cycle;

  \node[fill=black,circle,inner sep=1.3pt] at (0,0) {};
  \node[fill=black,circle,inner sep=1.3pt] at (2,0.75) {};
  \node[fill=black,circle,inner sep=1.3pt] at (-2,-0.75) {};
  \node[fill=black,circle,inner sep=1.3pt] at (-0.75,2) {};
  \node[fill=black,circle,inner sep=1.3pt] at (0.75,-2) {};
  \node[fill=black,circle,inner sep=1.3pt] at (1/5,5) {};
  \node[fill=black,circle,inner sep=1.3pt] at (-1/5,-5) {};
  \node[fill=black,circle,inner sep=1.3pt] at (5,-1/5) {};
  \node[fill=black,circle,inner sep=1.3pt] at (-5,1/5) {};
  
  \draw(1.95,0.9) node[right]{$\vec v_m$};
  \draw(-1.9,-0.9) node[left]{$-\vec v_m$};
  \draw(-0.72,1.94) node[above left]{$R(\vec v_m)$};
  \draw(0.73,-1.9) node[below right]{$-R(\vec v_m)$};
  \draw(0.13,5.23) node[right]{$R(\vec v_{m+1})$};
  \draw(-0.1,-5.25) node[left]{$-R(\vec v_{m+1})$};
  \draw(5,-0.2) node[below right]{$\vec v_{m+1}$};
  \draw(-4.97,0.14) node[above left]{$-\vec v_{m+1}$};

  \draw(4,0.22) node[above,color=blue]{$\cH(\vec v_{m+1})$};
  \draw(1.27,-1.35) node[right,color=red]{$\cH(\vec v_m)$};
  \draw(-4,0.75) node[above,color=teal]{$\cP$};
  \draw(0.75,4) node[right,color=teal]{$R(\cP)$};

\end{tikzpicture}
\caption{Покрытие $\cH(\vec v_{m+1})$}\label{fig:H_v_m+1_covering}
\end{figure}

  Поскольку $|q_m\theta-p_m|<1/2$ при достаточно большом $m$, а оператор $A$ является композицией поворота и гомотетии с коэффициентом $\sqrt{\theta^2+1}$, справедливо включение
  \[
    \cP\subset A(\cQ)\cup(\vec v_{m+1}+\cB)\cup(-\vec v_{m+1}+\cB),
  \]
  где $\cB$ обозначает открытый круг радиуса $\sqrt{\theta^2+1}$ с центром в начале координат. В этом круге ненулевых точек решётки $\La_\theta$ нет. Следовательно, 
  \[
    \cP\cap\La_\theta=\{\vec 0,\pm\vec v_m,\pm\vec v_{m+1}\},
  \]
  причём во внутренности $\cP$ ненулевых точек решётки нет. Учитывая \eqref{eq:product_decaying}, получаем, что
  \[
    \cH(\vec v_{m+1})\cap\La_\theta=\{\vec 0,\pm\vec v_{m+1},R(\pm\vec v_{m+1})\},
  \]
  где $R$ обозначает поворот $\R^2$ вокруг начала координат на $90^\circ$, ибо справедливо включение (см. рис. \ref{fig:H_v_m+1_covering})
  \[
    \cH(\vec v_{m+1})
    \subset
    \cH(\vec v_m)\cup\cP\cup R(\cP).
  \]
  Стало быть, точка $\vec v_{m+1}$, действительно, является гиперболическим минимумом решётки $\La_\theta$.

  Повторяя данное рассуждение, получаем, что все точки $\vec v_k=(q_k\theta-p_k,q_k+p_k\theta)$ с $k\geq m$ являются гиперболическими минимумами.
\end{proof}

\begin{corollary*}
  Пусть $\theta$ и $\La_\theta$ удовлетворяют условию леммы \ref{l:relative_is_hyperbolic}. Положим
  \[
    \vec v_k=(q_k\theta-p_k,q_k+p_k\theta)
  \]
  для каждого $k\geq0$. Тогда 
  \begin{equation}\label{eq:hat_hat_omega_in_terms_of_relative_minima}
    \hat{\hat\omega}(\La_\theta)=\sup\Big\{ \gamma\in\R \,\Big|\, \exists\,K\in\N:\,\forall k\geq K\ \Pi(\vec v_k)\leq|\vec v_{k+1}|^{-\gamma} \Big\}.
  \end{equation} 
\end{corollary*}

\begin{proof}
  Достаточно воспользоваться леммой \ref{l:relative_is_hyperbolic} и равенством \eqref{eq:hat_hat_omega_in_terms_of_hyperbolic_minima}.
\end{proof}

\subsection{Ограниченность $\hat{\hat\omega}(\La_\theta)$}

\begin{lemma}\label{l:the_next_hyperbolic_minimum_is_large}
  Пусть $\La_\theta=A\Z^2$, где $A$ определяется равенством \eqref{eq:A_theta}. Пусть $(\vec z_k)_{k\in\N}$ --- последовательность её гиперболических минимумов. Тогда для любого $k\geq2$ справедливо
  \begin{equation}\label{eq:the_next_hyperbolic_minimum_is_large}
    |\vec z_{k+1}|>\frac43\varphi^{k-2}\Pi(\vec z_k)^{-2},
  \end{equation}
  где $\varphi=\frac{1+\sqrt5}{2}$.
\end{lemma}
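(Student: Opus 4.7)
The plan is to combine the covolume lower bound for determinants of pairs of hyperbolic minima with an iterative Fibonacci-type argument, exploiting the rotation invariance of $\La_\theta$ to control coordinate orientations. For brevity write $a_j = |\vec z_j|$ and $b_j = \Pi(\vec z_j)$. Each $\vec z_j$ has coordinates satisfying $\{|z_{j,1}|,|z_{j,2}|\} = \{a_j,\,b_j^2/a_j\}$, and the $90^\circ$ rotation $R\colon(x,y)\mapsto(-y,x)$ preserves $\La_\theta$ (since the matrix $A$ of \eqref{eq:A_theta} commutes with $R$), so $R(\vec z_j) \in \La_\theta$ has the same $|\cdot|$ and $\Pi$ values as $\vec z_j$ but swaps the dominant and subdominant coordinates. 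From $\vec z_k \notin \hat\cH(\vec z_{k-1})$ together with $a_k > a_{k-1}$ one deduces $b_k \leq b_{k-1}$.

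The main step is to derive a multiplicative Fibonacci-type recursion of the form $a_{k+1} b_k^2 \geq \varphi\, a_k b_{k-1}^2$. Since $\vec z_k,\vec z_{k+1}$ are linearly independent lattice points, $|\det(\vec z_k,\vec z_{k+1})| \geq \covol(\La_\theta) = \theta^2+1$. Using $R$ to align the dominant coordinates of $\vec z_k$ and $\vec z_{k+1}$ on the same axis, expanding the determinant gives
\[
  \frac{a_{k+1}\,b_k^2}{a_k} \;+\; \frac{a_k\,b_{k+1}^2}{a_{k+1}} \;\geq\; \theta^2+1.
\]
The condition $\hat\cH(\vec z_k)\cap\La_\theta = \{\vec 0\}$ bounds the second summand from above, isolating $a_{k+1} b_k^2/a_k$ as essentially at least $\theta^2+1$. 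Pairing this with the analogous relation for $(\vec z_{k-1},\vec z_k)$ and using the emptiness of the hyperbolic regions yields a Fibonacci-like inequality $a_{k+1} \geq a_k + a_{k-1}$ (up to correction factors in the $b$-ratios), whose characteristic equation $r^2 = r + 1$ produces the factor $\varphi$.

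The base case $k=2$, namely $a_3 b_2^2 > \tfrac{4}{3}$, I will verify by direct analysis of the first three hyperbolic minima, applying Minkowski's theorem (and, for the sharp constant $\tfrac{4}{3} = \gamma_2^2$, Hermite's bound) to a suitable convex subset of $\hat\cH(\vec z_2)$ and again using the $R$-invariance of $\La_\theta$. Iterating the recursion from $k=2$ then delivers $a_{k+1}b_k^2 > \tfrac{4}{3}\varphi^{k-2}$ for all $k \geq 2$. The main obstacle I anticipate is the orientation-alignment step: in the \emph{opposite} orientation case, the determinant estimate gives only $a_k a_{k+1} \approx \theta^2+1$, which does not yield growth, so the rotation invariance of $\La_\theta$ must be invoked precisely to reduce to the \emph{same} orientation case; and Fibonacci-style growth must be extracted carefully to produce exactly the constant $\varphi$ rather than something weaker.
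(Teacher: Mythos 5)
Your plan hinges on two steps that do not hold up. The central one is the per-step recursion $|\vec z_{k+1}|\,\Pi(\vec z_k)^2\geq\varphi\,|\vec z_k|\,\Pi(\vec z_{k-1})^2$, which you never actually derive, and which the proposed mechanism cannot deliver. The covolume bound only gives $\frac{|\vec z_{k+1}|\Pi(\vec z_k)^2}{|\vec z_k|}+\frac{|\vec z_k|\Pi(\vec z_{k+1})^2}{|\vec z_{k+1}|}\geq\theta^2+1$, and the emptiness of $\hat\cH(\vec z_k)$ says nothing that makes the second summand negligible: for badly approximable $\theta$ (say all partial quotients equal to $1$) consecutive hyperbolic minima have comparable norms and comparable $\Pi$-values, so the two summands are of the same order and you can isolate the first one only up to a factor about $2$, with $\theta$-dependent constants. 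Worse, in this near-golden-ratio regime the ratio $|\vec z_{k+1}|\Pi(\vec z_k)^2\big/\big(|\vec z_k|\Pi(\vec z_{k-1})^2\big)$ tends to $\varphi$ itself, so a sharp per-step factor $\varphi$ cannot come out of a crude triangle-inequality estimate with any slack; and the fallback you mention, an additive inequality $a_{k+1}\geq a_k+a_{k-1}$, does not imply a one-step ratio $\geq\varphi$ either (Fibonacci ratios dip below $\varphi$ infinitely often), so the passage from ``Fibonacci-like inequality'' to the multiplicative factor $\varphi$ is a non sequitur. The base case is also unfounded: $\hat\cH(\vec z_2)=\{\vec w:|\vec w|<|\vec z_3|,\ \Pi(\vec w)<\Pi(\vec z_2)\}$ is a cross-shaped, non-convex region, and any centrally symmetric convex subset of it has area $O\big(\Pi(\vec z_2)^2\big)$ \emph{independently of} $|\vec z_3|$; hence Minkowski (or Hermite) applied to such a subset can only bound $\Pi(\vec z_2)$ in terms of $\covol\La_\theta$ and gives no lower bound on $|\vec z_3|$, i.e.\ no inequality of the form $|\vec z_3|\Pi(\vec z_2)^2>\tfrac43$.

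The paper's proof takes a different and genuinely arithmetic route, and that is where both the factor $\varphi^{k-2}$ and the constant $\tfrac43$ come from: by Lemma \ref{l:hyperbolic_is_also_relative} each $\vec z_k$ is a relative minimum, hence (after rotation) of the form $\vec v_n=(q_n\theta-p_n,\,q_n+p_n\theta)$ with $n=n(k)\geq k-1$; the growth is the global Fibonacci growth of denominators $q_n>\varphi^{n-1}$ (not a one-step geometric gain), which yields $|\vec z_{k+1}|\geq q_{n+1}+p_{n+1}\theta>2q_{n+1}>2a_{n+1}\varphi^{n-1}$, while the classical bounds $\frac1{a_{n+1}+2}<q_n|q_n\theta-p_n|<\frac1{a_{n+1}}$ give $a_{n+1}^{-1}<\frac32\Pi(\vec z_k)^2$; multiplying produces exactly $\frac43\varphi^{k-2}$. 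If you want to salvage your scheme, you would have to replace the per-step recursion by precisely this kind of global information about the full sequence of relative minima between consecutive hyperbolic minima; as written, the argument has a gap at its key step and at its base case.
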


\begin{proof}
  Поскольку решётка $\La_\theta$ инвариантна относительно поворота на $90^\circ$ вокруг начала координат, можно считать, что $\vec z_k$ выбраны таким образом, что первая координата каждой точки $\vec z_k$ ограничена единицей. По лемме \ref{l:hyperbolic_is_also_relative} все точки $\vec z_k$ являются относительными минимумами решётки $\La_\theta$. Стало быть, для каждого $k$ найдётся такое $n=n(k)$, что $\vec z_k=\vec v_n=(q_n\theta-p_n,q_n+p_n\theta)$ и для каждого $k$ справедливо $n(k+1)>n(k)\geq k-1$. Здесь, как и прежде, $p_n/q_n=[a_0;a_1,a_2,\ldots,a_n]$ --- подходящая дробь числа $\theta$. Для неполного частного $a_{n+1}$ и знаменателей $q_{n-1},q_n,q_{n+1}$ справедливо равенство $q_{n+1}=a_{n+1}q_n+q_{n-1}$. Известно также, что $q_n>\varphi^{n-1}$ (данное неравенство легко доказывается по индукции). Стало быть, поскольку $\theta>1$, для $n=n(k)$ справедливо
  \begin{equation}\label{eq:the_next_hyperbolic_minimum_is_large_first_half}
    a_{n+1}<
    \frac{q_{n+1}}{q_n}<
    \frac{q_{n+1}}{\varphi^{n-1}}<
    \frac{q_{n+1}+p_{n+1}\theta}{2\varphi^{n-1}}=
    \frac{|\vec v_{n+1}|}{2\varphi^{n-1}}\leq
    \frac{|\vec z_{k+1}|}{2\varphi^{k-2}}.
%
  \end{equation}
  При этом, как известно (см., например, \cite{lang,khintchine_CF,schmidt_DA}),
  \begin{equation}\label{eq:product_vs_partial_quotient}
    \frac1{a_{n+1}+2}<
    q_n|q_n\theta-p_n|<
    \frac1{a_{n+1}}\,.
  \end{equation}
  Следовательно, 
  \begin{multline}\label{eq:the_next_hyperbolic_minimum_is_large_second_half}
    a_{n+1}^{-1}\leq
    3(a_{n+1}+2)^{-1}<
    3q_n|q_n\theta-p_n|< \\ <
    \frac32\big|(q_n\theta-p_n)(q_n+p_n\theta)\big|=
    \frac32\Pi(\vec v_n)^2=
    \frac32\Pi(\vec z_k)^2.
  \end{multline}
  Из \eqref{eq:the_next_hyperbolic_minimum_is_large_first_half} и \eqref{eq:the_next_hyperbolic_minimum_is_large_second_half} получаем
  \[
    |\vec z_{k+1}|>
    2\varphi^{k-2}a_{n+1}>
    \frac43\varphi^{k-2}\Pi(\vec z_k)^{-2}.
  \]
\end{proof}

\begin{corollary*}
  Пусть $\theta\in\R\backslash\Q$, $\theta>1$. Тогда справедливы оценки
  \begin{equation}\label{eq:upper_bound_for_hat_hat_omega}
    \hat{\hat\omega}(\La_\theta)\leq\frac12\,,
    \qquad
    \hat{\hat\omega}(\theta)\leq2.
  \end{equation}
\end{corollary*}

\begin{proof}
  Из неравенства \eqref{eq:the_next_hyperbolic_minimum_is_large} следует, что $\Pi(\vec z_k)>|\vec z_{k+1}|^{-1/2}$.
  Остаётся воспользоваться соотношениями \eqref{eq:hat_hat_omega_in_terms_of_hyperbolic_minima} и \eqref{eq:hat_hat_omega_La_theta_vs_hat_hat_omega_theta}.
\end{proof}

\section{Доказательство теоремы \ref{t:spectrum_of_the_weak_for_lattices}}\label{sec:the_proof}

Зафиксируем $\gamma>0$. Положим $a_0=1$. Для каждого целого $k\geq0$ положим
\begin{equation}\label{eq:next_partial_qoutient_definition}
  a_{k+1}=[q_k^\gamma]+1,
\end{equation}
где $[\,\cdot\,]$ обозначает целую часть, а $q_k$ --- знаменатель дроби $p_k/q_k=[a_0;a_1,a_2,\ldots,a_k]$. Рассмотрим число $\theta=[a_0;a_1,a_2,\ldots]$. Как мы отмечали выше (см. \eqref{eq:product_vs_partial_quotient}),
\[
  \frac1{a_{k+1}+2}<
  q_k|q_k\theta-p_k|<
  \frac1{a_{k+1}}\,.
\]
Учитывая \eqref{eq:next_partial_qoutient_definition}, получаем
\begin{equation}\label{eq:bounds_for_the_product}
  \frac1{q_k^\gamma+3}<
  q_k|q_k\theta-p_k|<
  \frac1{q_k^\gamma}\,.
\end{equation}
Из \eqref{eq:next_partial_qoutient_definition} также следует, что
\begin{equation}\label{eq:consecutive_denominators_inequality}
  q_{k+1}=
  a_{k+1}q_k+q_{k-1}>
  q_k^{1+\gamma}
\end{equation}
и
\begin{equation}\label{eq:consecutive_denominators_asymptotics}
  q_{k+1}<
  (q_k^\gamma+1)q_k+q_k<
  3q_k^{1+\gamma}
\end{equation}
Заметим, что при $q_k>12^{1/{\gamma^2}}$ выполняется $q_k^{\gamma^2}>12\geq3+9q_k^{-\gamma}$, откуда
\begin{equation}\label{eq:technical_inequality_for_q_k}
  \frac1{q_k^\gamma+3}>
  \frac3{q_k^{\gamma(1+\gamma)}}\,.
\end{equation}
Собирая вместе \eqref{eq:bounds_for_the_product}, \eqref{eq:consecutive_denominators_inequality}, \eqref{eq:technical_inequality_for_q_k}, получаем, что при фиксированном $\gamma$ для всех достаточно больших $k$ справедливо
\[
  q_k|q_k\theta-p_k|>
  \frac1{q_k^\gamma+3}>
  \frac3{q_k^{\gamma(1+\gamma)}}>
  \frac3{q_{k+1}^\gamma}>
  3q_{k+1}|q_{k+1}\theta-p_{k+1}|,
\]
то есть условие леммы \ref{l:relative_is_hyperbolic} выполняется. Стало быть, если обозначить
\[
  \vec v_k=(q_k\theta-p_k,q_k+p_k\theta)
\]
и заметить, что ввиду \eqref{eq:bounds_for_the_product}, \eqref{eq:consecutive_denominators_inequality}, \eqref{eq:consecutive_denominators_asymptotics} справедливы оценки
\begin{multline*}
  \Pi(\vec v_k)=
  \big|(q_k\theta-p_k)(q_k+p_k\theta)\big|^{1/2}
  \asymp
  \big(q_k|q_k\theta-p_k|\big)^{1/2}
  \asymp \\ \asymp
  q_k^{-\gamma/2}
  \asymp
  q_{k+1}^{-\gamma/(2+2\gamma)}
  \asymp
  (q_{k+1}+p_{k+1}\theta)^{-\gamma/(2+2\gamma)}=
  |\vec v_{k+1}|^{-\gamma/(2+2\gamma)},
\end{multline*}
получим благодаря соотношению \eqref{eq:hat_hat_omega_in_terms_of_relative_minima} равенство
\[
  \hat{\hat\omega}(\Lambda_\theta)=\gamma/(2+2\gamma).
\]

Образом множества положительных чисел при отображении $\gamma\mapsto\gamma/(2+2\gamma)$ является интервал $(0,1/2)$. Стало быть, весь этот интервал принадлежит спектру значений $\hat{\hat\omega}(\Lambda)$.

Далее, если в качестве $\theta$ взять иррациональное число с ограниченными неполными частными, то для такого числа $\omega(\theta)=1$ и в соответствии с \eqref{eq:omega_La_vs_no_maximum} имеем $\omega(\La_\theta)=0$. Поскольку же $0\leq\hat{\hat\omega}(\La_\theta)\leq\omega(\La_\theta)$, получаем $\hat{\hat\omega}(\La_\theta)=0$.

Если же вместо роста неполных частных, задаваемого \eqref{eq:next_partial_qoutient_definition}, рассмотреть более быстрый рост --- к примеру, определить неполные частные соотношениями
\begin{equation}\label{eq:next_partial_qoutient_definition_exponential}
  a_{k+1}=[q_k^k]+1,
\end{equation}
то, следуя шагам рассуждения, приведённого выше, несложно показать, что для такого $\theta$ будет выполнено $\hat{\hat\omega}(\La_\theta)=1/2$.

Значения $\hat{\hat\omega}(\La_\theta)$ большие $1/2$ ввиду \eqref{eq:upper_bound_for_hat_hat_omega} при иррациональном $\theta$ не достигаются. Если же в качестве $\theta$ взять число рациональное, то очевидно, что тогда $\hat{\hat\omega}(\La_\theta)=\omega(\La_\theta)=\infty$.

Теорема доказана.

\section*{Благодарности}
Автор является победителем конкурса <<Ведущий учёный>> Фонда развития теоретической физики и математики <<БАЗИС>> и хотел бы поблагодарить спонсоров и жюри конкурса.


\begin{thebibliography}{99}

\bibitem
    {dirichlet} 
    \textsc{J.\,P.\,G.\,Lejeune\,Dirichlet} 
    \textit{Verallgemeinerung eines Satzes aus der Lehre von den Kettenbrüchen nebst einigen Anwendungen auf die Theorie der Zahlen.} 
    Sitzungsber. Preuss. Akad. Wiss. (1842), 93--95.
\bibitem
    {german_UMN_2023}
    \textsc{О.\,Н.\,Герман} 
    \textit{Геометрия диофантовых экспонент.}
    Успехи математических наук, \textbf{78}:2 (470) (2023), 71--148.
\bibitem
    {german_comm_math_2023}
    \textsc{O.\,N.\,German}
    \textit{On triviality of uniform Diophantine exponents of lattices.}
    Commu\-ni\-ca\-tions in Mathematics,
    \textbf{31}:2 (2023), 27--33.
\bibitem
    {german_izv_2025}
    \textsc{О.\,Н.\,Герман} 
    \textit{О равномерных диофантовых экспонентах решёток.}
    Матем. сб., в печати.
\bibitem
    {german_lattice_exponents_spectrum}
    \textsc{О.\,Н.\,Герман}
    \textit{Линейные формы заданного диофантового типа и экспоненты решеток.}
    Изв. РАН. Сер. матем., \textbf{84}:1 (2020), 5--26.
\bibitem
    {borevich_shafarevich}
    \textsc{З.\,И.~Боревич, И.\,Р.~Шафаревич}
    \textit{Теория чисел.}
    М., Наука (1964).
\bibitem
    {german_lattice_transference}
    \textsc{О.\,Н.\,Герман}
    \textit{Диофантовы экспоненты решёток.}
    Совр. пробл. матем., \textbf{23} (2016), 35--42.
\bibitem
    {german_mathnotes_2006}
    \textsc{О.\,Н.\,Герман}
    \textit{Полиэдры Клейна и относительные минимумы решёток.}
    Матем. заметки, \textbf{79}:4 (2006), 546--552.
\bibitem
    {german_tlyustangelov_MJCNT_2016}
    \textsc{O.\,N.\,German, I.\,A.\,Tlyustangelov}
    \textit{Palindromes and periodic continued fractions.}
    Moscow J. Comb. Number Theory, \textbf{6}:2--3 (2016), 233--252.
\bibitem
    {lang}
    \textsc{С.\,Ленг}
    \textit{Введение в теорию диофантовых приближений.}
    М.:\,<<Мир>> (1970).
\bibitem
    {khintchine_CF}
    \textsc{А.\,Я.\,Хинчин}
    \textit{Цепные дроби.}
    М.:\,<<Наука>> (1978).
\bibitem
    {schmidt_DA}
    \textsc{В.\,Шмидт}
    \textit{Диофантовы приближения.}
    М.:\,<<Мир>> (1983).

\end{thebibliography}
\end{document}